\DeclareMathOperator*{\rank}{rank}
\newcommand\oprocendsymbol{\hbox{$\triangle$}}
\newcommand\oprocend{\relax\ifmmode\else\unskip\hfill\fi\oprocendsymbol}
\DeclareSymbolFont{bbold}{U}{bbold}{m}{n}
\DeclareSymbolFontAlphabet{\mathbbold}{bbold}
\newtheorem{theorem}{Theorem}
\newtheorem{remark}[theorem]{Remark}
\newtheorem{example}[theorem]{Example}
\newtheorem{lemma}[theorem]{Lemma}
\newtheorem{definition}[theorem]{Definition}
\newtheorem{proposition}[theorem]{Proposition}
\newtheorem{corollary}[theorem]{Corollary}
\newcommand {\be}{\begin{equation}}
\newcommand {\ee}{\end{equation}}
\newcommand{\im}{{\rm Im}}
\title{\LARGE\bf  On the equivalence of model-based and data-driven approaches to the design of unknown-input observers}
\author{Giorgia Disar\`o and Maria Elena Valcher
\thanks{G. Disar\`o and  M.E. Valcher are with the Dipartimento di Ingegneria dell'Informazione,
 Universit\`a di Padova,
    via Gradenigo 6B, 35131 Padova, Italy, e-mail:  \texttt{giorgia.disaro@phd.unipd.it, meme@dei.unipd.it}}
   }
  \date{}
\begin{document}
\maketitle

\begin{abstract}
In this paper we investigate a data-driven approach to the design of an unknown-input observer (UIO).
Specifically, we provide necessary and sufficient conditions for the existence  of an unknown-input observer for a discrete-time linear time-invariant (LTI) system, designed based only on some available data, obtained on a finite time window. 
We  also prove that, under weak assumptions on the collected data, the solvability conditions derived by means of the data-driven approach are in fact equivalent to those obtained through the model-based one. 
In other words, % if the collected data are sufficiently rich, 
the data-driven conditions 
do not impose further constraints with respect to the classic 
 model-based ones, expressed in terms of the original system matrices.
%This paper follows up and expands the results first presented in \cite{Ferrari-Trecate}.
\end{abstract}

\section{Introduction}
 
In many control engineering  applications, knowing the internal state of a system is mandatory to solve fundamental problems, such as state feedback stabilization and fault detection. However, most of the times the state of the system is not accessible, and hence one needs to design a suitable observer that  produces, at least asymptotically, a good estimate of  the original state vector.
The theory of asymptotic observers originated with the  works of Luenberger \cite{lue,lue2},
%, or even earlier with the contributions of Kalman \cite{kal59, kal, kal61}. {\color{red} (Forse così è presa un po' troppo alla larga...)} 
focusing on linear state-space models.
In the standard set-up, the model   description as well as the input and output signals affecting the system are assumed to be available.
In a lot of practical situations, however, 
the system dynamics is affected by disturbances,  measurement errors  or other unknown signals 
that cannot be used to identify the state evolution.
 Therefore, in the last decades considerable attention has been devoted to study the problem of state estimation in the presence of unknown inputs. The goal is to design an observer whose estimation error asymptotically converges to zero, regardless  of the initial conditions, %as for the standard observers, 
 and  of the dynamics of the unknown inputs  acting on the system. This can be considered a qualitative definition of {\em unknown-input observer (UIO)}, which is the core of   this paper. 

In the literature we can find numerous solutions to the problem, exploiting different approaches: some use a priori information about the unknown input, for instance by modeling it as the response of a suitably chosen dynamical system \cite{Hostetter}, others instead assume to have no prior knowledge on the unknown disturbance and solve the problem trying to exploit decoupling properties of the system using algebraic methods \cite{Hou-Muller1, Kudva, doppioWang}, geometric methods \cite{Bhatta}, generalized inverse approaches \cite{Miller} or techniques based on the singular value decomposition \cite{Fairman}, just to mention a few. Necessary and sufficient conditions for the existence of a UIO have been derived (see, e.g., \cite{Darouach2,Darouach,UIO-MEV}) and practical design procedures have been provided, e.g., in \cite{Yang-Wilde}. 
All the works mentioned so far 
rely on the common assumption that   the system model is known, and therefore their analysis is carried out using model-based approaches. 

{\color{black} More recently, the availability of large quantities of data has led to an increasingly widespread diffusion of data-driven techniques to solve control engineering problems \cite{DePersisTesi,MarkRap}, including the state estimation problem \cite{Mishra22,Shi22,Ferrari-Trecate,Wolff22}.
%More recently, with the blooming of data-driven techniques \cite{DePersisTesi,MarkRap}, methodologies using data instead of the system model have been applied also to observers   \cite{Mishra22,Shi22,Wolff22} and in particular to the design of unknown-input observers \cite{Ferrari-Trecate}. 
Two types of techniques have been adopted: a two-step approach, that relies on a preliminary system identification step, and a single step approach, that exploits directly the collected data, avoiding the identification phase. 
However, in some cases (see, e.g., \cite{Ferrari-Trecate})  it is not possible to uniquely identify the system leveraging only the available data and thus  a one-step procedure is the only viable option. }
%More recently, with the blooming of data-driven techniques \cite{DePersisTesi,MarkRap}, methodologies using data instead of the system model have been applied also to observers   \cite{Mishra22,Shi22,Wolff22} and in particular to the design of unknown-input observers \cite{Ferrari-Trecate}. In this context  two types of techniques have been adopted: a two-step approach and a single step approach. The former consists in first identifying the original model, based on the collected data 
%(with techniques ranging from subspace identification \cite{subspace_id, YU2016} to errors-in-variables methods \cite{Soderstrom}).
%%and then exploiting the estimated model to design the UIO. There exist several techniques to identify a system with unknown inputs, , but none of the existing methods can guarantee the exact identification of the system from a finite number of data even in a noiseless scenario. 
%The one-step procedure, instead, does not require any preliminary identification phase {\color{black} (that in some cases would not even be possible)} and  designs the UIO directly from  data, without building a model of the system whose state we want to estimate. 
\\
In this paper we  {\color{black} consider a problem set-up similar to the one adopted in \cite{Ferrari-Trecate} and hence  focus on a single step data-driven approach. } 
More in detail, the goal of this paper is to determine necessary and sufficient conditions for the existence  of an unknown-input observer for a discrete-time linear time-invariant (LTI) system, designed based only on some available data (obtained on a finite time window), without exploiting the knowledge of the system matrices. 
The problem of designing a data-driven UIO for this type of system has already been addressed in the literature in \cite{Ferrari-Trecate} and {\em sufficient conditions} for its existence have been derived. 
Indeed, under suitable assumptions, the collected data {\color{black} have  been used in  \cite{Ferrari-Trecate}} to derive the state space description of {\em one of the candidate UIOs}. 
If  such system is asymptotically stable, then it is a  UIO  that asymptotically tracks the  
state of the original system, despite   the presence of disturbances. {\color{black} However, if the obtained system is not asymptotically stable, it is  not obvious if a UIO can be designed based on such data.}
Compared with  \cite{Ferrari-Trecate}, our contribution is threefold: (1) 
 we  provide necessary and sufficient conditions for the problem solvability {\color{black} that can be verified a priori on data};
 (2) we provide a complete parametrization of all candidate UIOs;
 (3) we   prove that, under certain hypotheses {\color{black} on the collected data}, the solvability conditions derived by means of the data-driven approach are identical to those obtained through the model-based one in \cite{Darouach2,Darouach,Yang-Wilde}.
% and 
%(4) we   show that a data-driven UIO can be obtained under 
%slightly weaker assumptions than those introduced in \cite{Ferrari-Trecate}. 
%thus also   providing} a unified framework that bridges together the model-based and the data-driven approaches. 
 
The paper is organized as follows. Section \ref{problem} provides  the formal  problem statement. Section \ref{sec:MBapproach} examines the model-based approach, providing   necessary and sufficient conditions for the existence of a UIO.
% {\color{red} as well as some additional insights to clarify aspects of the solution that in the existing literature have remained unclear}. 
Section \ref{DD_UIO} provides the solution to the problem in the data-driven framework, giving a complete parametrization of all possible UIOs. 
{\color{black} In Section \ref{concl_rem} some useful remarks about how to simplify the problem solution, as well as a numerical example, 
are given.} Finally, Section \ref{concl} concludes the paper.

\smallskip
{\bf Notation.} 
Given a matrix $M\in {\mathbb R}^{p \times m}$, we denote by $M^\dag\in {\mathbb R}^{m \times p}$ its {\em Moore-Penrose inverse} \cite{BenIsraelGreville}. Note that if $M$ is of full column rank, then $M^\dag = (M^\top M)^{-1} M^\top$. A symmetric result holds if $M$ is of full row rank. 
The null and column space of $M$ are denoted by $\ker{(M)}$ and $\im (M)$, respectively. 
Given a vector signal $v(t)\in\mathbb{R}^n$ with $t\in \mathbb Z _+$, we use the notation $\{v(t)\}_{t=0}^{N}$, $N\in\mathbb Z _+$, to indicate the sequence of vectors $v(0),\dots,v(N)$.  
\medskip
 
\section{Problem formulation}\label{problem}

Consider the discrete-time LTI system, $\Sigma$, described by:
\begin{eqnarray}\label{system_1}
x(t+1) &=& Ax(t)+Bu(t)+Ed(t) \\ 
y(t) &=& Cx(t), \label{system_2}
\end{eqnarray}
where $t\in\mathbb{Z}_+$, $x(t)\in \mathbb{R}^n$ is the state, $u(t)\in\mathbb{R}^m$ is the (known) control input, $y(t)\in\mathbb{R}^p$ is the output and $d(t)\in\mathbb{R}^r$ is the unknown input of the system, e.g., a disturbance. Without loss of generality, % (w.l.o.g), 
we assume that the matrix $E\in\mathbb{R}^{n\times r}$ is of full column rank, i.e., $\rank{E}=r$. Indeed, if 
%this is not the case and 
$\rank{E}=\bar r<r$, we can always rewrite it as $E =\bar{E} T $, where $\bar{E}\in\mathbb{R}^{n\times \bar r}$ is a full column rank matrix and $T\in\mathbb{R}^{\bar r \times r}$ is a full row rank matrix, and define a new unknown input $\bar{d} (t) \triangleq Td(t)$. 

A UIO for system \eqref{system_1}-\eqref{system_2}  is a state space model,  
receiving as its inputs   the input and output of the original system and producing as its output an    estimate $\hat x$ of the state $x$ of  \eqref{system_1}-\eqref{system_2}, such that 
$e(t) \triangleq x(t)-\hat x(t)$ (the estimation error) asymptotically converges to zero, regardless of the initial conditions and of the dynamics of the unknown input  acting on the system.  
More specifically, in the sequel we will refer to the following definition of UIO.
\smallskip

\begin{definition}\label{UIO}
An LTI system $\hat{\Sigma}$ of the form
%\footnote{We did  not  include the input $u(t)$ in   equation \eqref{UIO_eq2}, since  
%direct calculations  show that such input must be weighted by the zero matrix.}
\begin{eqnarray}\label{UIO_eq1}
z(t+1) &=& A_{UIO}z(t)+B_{UIO}^u u(t)+B_{UIO}^y y(t) \\
\hat{x}(t) &=& z(t) + D_{UIO}y(t), \label{UIO_eq2}
\end{eqnarray}
where   $z(t)$ and $\hat x(t)$, both belonging to  $\mathbb{R}^n$,  are the state and the output of $\hat \Sigma$, respectively,  is an  {\em  unknown-input observer (UIO)} for the system in \eqref{system_1}-\eqref{system_2} if $e(t) \triangleq x(t) - \hat{x} (t)$ tends to 0 as $t\to +\infty$, for every choice of $x(0)$, $z(0)$ and the input signal  $u(t), t\in {\mathbb Z}_+$, and   independently of the unknown input $d(t), t\in {\mathbb Z}_+$. 
\end{definition}
\medskip

\section{Necessary and sufficient conditions for the existence of a UIO: model-based approach}
\label{sec:MBapproach}

In this section we briefly recall the necessary and sufficient conditions for the existence
of a UIO $\hat \Sigma$ for system $\Sigma$ first derived in \cite{Darouach2, Darouach}.
%, and provide some additional insights into the model-based solution to the problem that will turn out to be useful for the subsequent comparison.
By making use of the system and UIO descriptions, we easily deduce that
 the state estimation error obeys the following dynamics:  \\
\\ $e(t+1) = x(t+1)-\hat{x} (t+1)$
\vspace*{-0.12cm}
\begin{eqnarray*} \label{error_dyn}
\!\!\!&=&\!\!\! x(t+1) - z(t+1) - D_{UIO}y(t+1) \\
\!\!\!&=&\!\!\! x(t+1) - A_{UIO} z(t)- B_{UIO}^u u(t) - B_{UIO}^y y(t) \\
\!\!\!&-&\!\!\!   D_{UIO}C x(t+1) \\ 
\!\!\!&=&\!\!\! (I-D_{UIO}C) x(t+1)  - A_{UIO} \hat x(t) - B_{UIO}^u u(t) \\
\!\!\!&+&\!\!\!   [A_{UIO} D_{UIO} - B_{UIO}^y] Cx(t)  \\ 
\!\!\!&=&\!\!\! A_{UIO}e(t) \\
\!\!\!&+&\!\!\!   (I-D_{UIO}C)Ed(t) + [(I-D_{UIO}C)B-B_{UIO}^u]u(t)  %\\
%\!\!\!&+&\!\!\! 
\\
\!\!\!&+&\!\!\!   [(I-D_{UIO}C)A-A_{UIO}(I-D_{UIO}C)-B_{UIO}^yC]x(t).
\end{eqnarray*}
%where, in the last step, we replaced $x(t+1)$ with its expression  given in \eqref{system_1}.
Therefore,   $e(t)$ is independent of the disturbance $d(t)$ and tends to 0 as $t\to +\infty$, for every choice of $u(t), t\in {\mathbb Z}_+$, $x(0)$ and $z(0)$, {\color{black} if and only if there exist $A_{UIO},
 B^u_{UIO}, B^y_{UIO}$, and $D_{UIO}$ such that the following conditions are satisfied}: 
\begin{eqnarray}
\!\!\!\!\!\!\!\!\!\!\!\!&&A_{UIO} \text{ \ is Schur stable}, \label{SS_cond1} \\
\!\!\!\!\!\!\!\!\!\!\!\!&&D_{UIO}CE = E, \label{SS_cond2} \\
\!\!\!\!\!\!\!\!\!\!\!\!&&B_{UIO}^u=(I-D_{UIO}C)B, \label{SS_cond3} \\
\!\!\!\!\!\!\!\!\!\!\!\!&&A_{UIO}(I-D_{UIO}C) + B_{UIO}^y  C = (I-D_{UIO}C)A.  \label{SS_cond4}
\end{eqnarray}
{\color{black}
When so,   the state estimation error follows the autonomous asymptotically stable dynamics 
$$e(t+1) = A_{UIO}e(t).$$

In the next theorem, we summarize the necessary and sufficient conditions for the existence of a UIO available in the literature. The proof is omitted since it can be   obtained by putting together  Theorems 1 and 2 in \cite{Darouach}, and Theorem 4 in \cite{Darouach2}.

\begin{theorem}\label{cns_MB}
The following facts are equivalent.
\begin{itemize}
\item[(i)] There exists a UIO $\hat \Sigma$ of the form \eqref{UIO_eq1}-\eqref{UIO_eq2} for system $\Sigma$.
\item[(ii)] There exist matrices $A_{UIO}\in {\mathbb R}^{n\times n}, B_{UIO}^u\in {\mathbb R}^{n \times m}, 
B_{UIO}^y\in {\mathbb R}^{n \times p}, $ and $D_{UIO} \in {\mathbb R}^{n \times p}$ that satisfy conditions
\eqref{SS_cond1}$\div$\eqref{SS_cond4}.
\item[(iii)] The following two conditions hold: 
\begin{itemize}
\item[(a)] ${\rm rank} (CE)={\rm rank}(E)=r$, and
\item[(b)] $
{\rm rank} \begin{bmatrix} zI_n - A & -E\cr
C & 0\end{bmatrix} = n+r, \ \forall z\in {\mathbb C}, |z|\ge 1$.
\end{itemize}
\item[(iv)] The triple $(A,E,C)$  is strong* detectable 
(see Definition 2 in \cite{Darouach2}), meaning that  $\lim_{t\to+\infty}{y(t)}=0$ implies $\lim_{t\to+\infty}{x(t)}=0$ for all $d(t)$ and $x(0)$, when $u=0$.\end{itemize}
\end{theorem}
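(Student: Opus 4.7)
The plan is to establish the theorem via the chain $(i)\Leftrightarrow(ii)\Leftrightarrow(iii)\Leftrightarrow(iv)$. The first equivalence follows directly from the error dynamics derived just above the statement: since $x(0)$, $z(0)$, $u(\cdot)$ and $d(\cdot)$ can be chosen independently, the requirement $e(t)\to 0$ in all cases forces the vanishing of the coefficients of $d(t)$, $u(t)$ and $x(t)$ in the expression for $e(t+1)$ (giving \eqref{SS_cond2}--\eqref{SS_cond4}) together with Schur stability of $A_{UIO}$ (giving \eqref{SS_cond1}); conversely, these four conditions yield $e(t+1)=A_{UIO}e(t)\to 0$.

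For $(ii)\Rightarrow(iii)(a)$, take ranks in \eqref{SS_cond2} to obtain $r=\rank(E)=\rank(D_{UIO}CE)\le \rank(CE)\le \rank(E)=r$. For $(ii)\Rightarrow(iii)(b)$, set $P:=I-D_{UIO}C$, so that $PE=0$ and, by \eqref{SS_cond4}, $PA=A_{UIO}P+B_{UIO}^yC$. A short computation then yields
\[\begin{bmatrix}P & B_{UIO}^y\end{bmatrix}\begin{bmatrix}zI_n-A & -E\\ C & 0\end{bmatrix}=\begin{bmatrix}(zI_n-A_{UIO})P & 0\end{bmatrix}.\]
If $(v;\xi)$ lies in the kernel of the pencil for some $|z|\ge 1$, premultiplying by $[P\ B_{UIO}^y]$ gives $(zI_n-A_{UIO})Pv=0$; Schur stability of $A_{UIO}$ forces $Pv=0$, so $v=D_{UIO}Cv=0$ (using $Cv=0$), and then $E\xi=0$ together with full column rank of $E$ gives $\xi=0$.

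For $(iii)\Rightarrow(ii)$, condition (a) allows the choice $D_{UIO}:=E(CE)^\dagger$, which satisfies $D_{UIO}CE=E$ and has the crucial additional property $\ker P=\im E$ (one inclusion is $PE=0$; for the other, any $v\in\ker P$ satisfies $v=E(CE)^\dagger Cv\in\im E$). The reparametrization $K:=B_{UIO}^y-A_{UIO}D_{UIO}$ reduces \eqref{SS_cond3}--\eqref{SS_cond4} to $A_{UIO}=PA-KC$ and $B_{UIO}^y=K+A_{UIO}D_{UIO}$, with $K\in\mathbb{R}^{n\times p}$ as a free design parameter. Hence a UIO exists if and only if there is a $K$ making $PA-KC$ Schur, i.e., if and only if $(PA,C)$ is detectable. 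To match this with (b) I would argue both directions: given $(v,\xi)$ in the kernel of the pencil at some $|z|\ge 1$, premultiplying $(zI-A)v=E\xi$ by $P$ gives $PAv=zv$ (using $PE=0$ and $Pv=v$), so detectability of $(PA,C)$ combined with $Cv=0$ forces $v=0$ and then $\xi=0$; conversely, given $PAv=zv$, $Cv=0$ and $|z|\ge 1$, one has $Av-zv\in\ker P=\im E$, hence $(zI-A)v=E\xi$ for some $\xi$, and (b) forces $v=0$.

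Finally, $(iii)\Leftrightarrow(iv)$ is the Hautus-type characterization of strong* detectability of the triple $(A,E,C)$ proved in \cite{Darouach2}: (a) corresponds to the rank condition at the point at infinity, while (b) captures the absence of invariant zeros in $\{|z|\ge 1\}$. I would invoke it directly rather than reprove it. The main obstacle is the equivalence between (b) and the detectability of $(PA,C)$: the identity $\ker P=\im E$ is essential and requires the specific choice $D_{UIO}=E(CE)^\dagger$ (an arbitrary $D_{UIO}$ with $D_{UIO}CE=E$ only yields $\im E\subseteq\ker P$); once this is in hand, the rest is routine linear algebra.
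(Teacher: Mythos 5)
The paper does not actually prove Theorem~\ref{cns_MB}: it explicitly omits the proof and delegates it to Theorems~1 and~2 of \cite{Darouach} and Theorem~4 of \cite{Darouach2}. Your proposal is therefore necessarily a different route, namely a self-contained reconstruction, and its substantive core is correct. The chain $(ii)\Rightarrow(iii)$ via the identity $\begin{bmatrix}P & B_{UIO}^y\end{bmatrix}\begin{bmatrix}zI_n-A & -E\\ C& 0\end{bmatrix}=\begin{bmatrix}(zI_n-A_{UIO})P & 0\end{bmatrix}$ with $P=I-D_{UIO}C$, and the converse $(iii)\Rightarrow(ii)$ via the specific choice $D_{UIO}=E(CE)^\dagger$, the identity $\ker P=\im (E)$, and the reduction of \eqref{SS_cond4} to the detectability of $(PA,C)$, all check out; you are also right that $\ker P = \im(E)$ is the hinge of the argument and genuinely requires that particular $D_{UIO}$. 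Invoking \cite{Darouach2} for $(iii)\Leftrightarrow(iv)$ mirrors what the paper does.

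The one step that is not airtight is $(i)\Rightarrow(ii)$. You assert that $e(t)\to 0$ for all admissible signals ``forces the vanishing of the coefficients'' of $d(t)$, $u(t)$ and $x(t)$ in the error recursion. That is not literally true: $x(t)$ is not a free signal but is generated by the cascade $x(t+1)=Ax(t)+Bu(t)+Ed(t)$, so what is forced is only that the transfer from $(u,d,x(0))$ to $e$ vanishes, not that each coefficient matrix is zero. For instance, with $A$ Schur one can build a valid UIO in the sense of Definition~\ref{UIO} whose matrices violate \eqref{SS_cond4} (take $M:=(I-D_{UIO}C)A-A_{UIO}(I-D_{UIO}C)-B_{UIO}^yC\neq 0$ with $\im(E)$ and $\im(B)$ inside the unobservable subspace of $(A,M)$). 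The existential statement $(ii)$ is still true, but it must be reached either by a more careful necessity argument or, more cleanly, by closing the cycle as $(i)\Rightarrow(iii)\Rightarrow(ii)$ rather than directly. To be fair, the paper's own informal derivation preceding the theorem makes exactly the same unproved ``if and only if'' claim, so this is a gap you share with the source rather than one you introduced; still, if you want a complete proof you should repair this direction.
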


\begin{remark} \label{remark3} It is worth noticing that condition  (iii), point  (a), alone, is equivalent to the existence of 
matrices $A_{UIO}, B_{UIO}^u, 
B_{UIO}^y, $ and $D_{UIO}$ that satisfy conditions
\eqref{SS_cond2}$\div$\eqref{SS_cond4}.
By adding condition   (iii), point   (b), we can guarantee that 
among the solutions of \eqref{SS_cond2}$\div$\eqref{SS_cond4} there is at least one with $A_{UIO}$ Schur stable.
\end{remark}}
%\smallskip
% 
%
%Now we derive necessary and sufficient conditions for the existence of a UIO designed  in a data-driven framework,  assuming that  the    collected data satisfy some suitable assumptions. 
\medskip

\section{The data-driven approach} \label{DD_UIO}
In order to tackle the problem in the data-driven framework, we assume (as in \cite{Ferrari-Trecate})
that we have performed an \emph{offline} experiment where we have collected some input/output/state trajectories in the time interval $[0,T-1]$ with $T\in\mathbb{Z}_+$, and we define the following vector sequences  
%$u_d = \col (\{u_d(t)\}_{t=0}^{T-2})$,  $y_d = \col (\{y_d(t)\}_{t=0}^{T-1})$ and  $x_d = \col (\{x_d(t)\}_{t=0}^{T-1})$, 
 $u_d =  \{u_d(t)\}_{t=0}^{T-2}$,  $y_d = \{y_d(t)\}_{t=0}^{T-1}$ and  $x_d =  \{x_d(t)\}_{t=0}^{T-1}$,
where we used the subscript $d$ to highlight the fact that we are referring to precollected (i.e., \emph{historical}) data. 
{\color{black} The motivation behind the assumption to have access to the state during the preliminary offline measurements is twofold
(see, also, \cite{Ferrari-Trecate} for a detailed discussion).
On the one hand, the access to the state in standard working conditions  may be not advisable, due to security reasons or to the high costs of dedicated sensors. However, this may become possible in a lab, in a dedicated test.
On the other hand, 
the only way to design a UIO from data is to have some information about the state itself.
  Indeed, it would not be possible to uniquely identify the state of the system and hence to construct a  UIO only from input/output data, without any knowledge of the dimension and the basis  of the state-space. The same input/output data are compatible with an infinite number of state-space models (even under reachability and observability assumptions) and hence do not provide sufficient information on the system to allow one to estimate its state.
Therefore even if it seems a restrictive assumption, the knowledge of some historical state measurements  is in fact necessary 
for the design of a data-driven UIO. 

Finally, even if  the unknown input %$d(t)$
is not accessible, and therefore we do not assume that disturbance data are available,  for the subsequent analysis it is useful to introduce a symbol for the sequence of historical unknown input data, i.e.,
 % $d_d = \col (\{d_d(t)\}_{t=0}^{T-2})$. 
  $d_d = \{d_d(t)\}_{t=0}^{T-2}$. \\
We rearrange 
the above data into the following matrices: 
\begin{eqnarray*}
U_p &\triangleq &\begin{bmatrix}
u_d(0) & \dots & u_d(T-2)
\end{bmatrix} \in {\mathbb R}^{m \times (T-1)}, \\
X_p &\triangleq& \begin{bmatrix}
x_d(0) & \dots & x_d(T-2)
\end{bmatrix} \in {\mathbb R}^{n \times (T-1)}, \\
X_f &\triangleq &\begin{bmatrix}
x_d(1) & \dots & x_d(T-1)
\end{bmatrix} \in {\mathbb R}^{n \times (T-1)}, \\
Y_p &\triangleq &\begin{bmatrix}
y_d(0) & \dots & y_d(T-2)
\end{bmatrix}\in {\mathbb R}^{p \times (T-1)}, \\
Y_f &\triangleq &\begin{bmatrix}
y_d(1) & \dots & y_d(T-1)
\end{bmatrix} \in {\mathbb R}^{p \times (T-1)}, 
\\
D_p &\triangleq& \begin{bmatrix}
d_d(0) & \dots & d_d(T-2)
\end{bmatrix} \in {\mathbb R}^{r \times (T-1)},
\end{eqnarray*}
where the subscripts $p$ and $f$ stand for past and future, respectively. }
%We also define the following matrix corresponding to the disturbance sequence $d_d$: 
%$$
%D_p \triangleq \begin{bmatrix}
%d_d(0) & \dots & d_d(T-2)
%\end{bmatrix} \in {\mathbb R}^{r \times (T-1)}. $$
Before providing the data-driven UIO formulation, we give the following definition, which is a slight modification of the one given in \cite{Ferrari-Trecate}.

% NEW VERSION
\begin{definition}\label{compatible_traj}
{\color{black} An (input/output/state) trajectory} $(\{u(t)\}_{t\in {\mathbb Z}_+},$ $\{y(t)\}_{t\in {\mathbb Z}_+},\{x(t)\}_{t\in {\mathbb Z}_+})$  is said to be {\em compatible with the historical data} $(u_d, y_d, x_d)$ if 
\be\label{compatibility}
\begin{bmatrix}
u(t) \\
y(t) \\
x(t) \\
x(t+1) 
\end{bmatrix} \in 
\im \left(
\begin{bmatrix}
U_p \\
Y_p \\
X_p \\
X_f 
\end{bmatrix}
\right), \ \forall t\in {\mathbb Z}_+.
\ee
\end{definition}

%NEW VERSION
The set of all   trajectories compatible with the historical data $(u_d, y_d, x_d)$ is denoted by
\begin{eqnarray}\label{comp_set}
\mathbb{T}_c(u_d,y_d,x_d) \!\!\!\! &\triangleq& \!\!\!\! \{ (\{u(t)\}_{t\in {\mathbb Z}_+},\{y(t)\}_{t\in {\mathbb Z}_+},\{x(t)\}_{t\in {\mathbb Z}_+}) : \nonumber \\ 
\!\!\!\!&&\!\!\!\! \eqref{compatibility}\text{ holds}\}.
\end{eqnarray}

\begin{remark} It is worth noticing that the definition of compatibility that we adopt is slightly different from the one introduced in \cite{Ferrari-Trecate} (see Definition 2) in that we  have replaced a condition on the vector $\begin{bmatrix}
u(t)^\top \!&\!
y(t)^\top  \!&\!
x(t)^\top  \!&\!
u(t+1)^\top  \!&\!
y(t+1)^\top  \!&\!
x(t+1)^\top
\end{bmatrix}^\top$ with one on 
%the vector 
$\begin{bmatrix}
u(t)^\top &
y(t)^\top &
x(t)^\top &
x(t+1)^\top
\end{bmatrix}^\top$.
As we will see, % (Lemma \ref{T_ext} below), 
this definition is equally powerful  when 
trying to identify (based on the historical data)
  the trajectories that are compatible with the system, but is more compact.
%   and hence it will allow us to resort to slightly weaker assumptions %on the historical data  
%  %in order to be able 
%  to design the UIO. {\color{red} vedere di giustificare in qualche momento queste affermazioni} 
  {\color{black} Moreover, instead of imposing condition \eqref{comp_set} for $0 \le t \le T-2$, we believe 
  that checking it on ${\mathbb Z}_+$ better formalises the idea that 
  a finite set of  historical data can be used to characterise system trajectories defined on the whole (nonnegative) time axis.
  }
\end{remark}

We now introduce   the set of all the (input/output/state) trajectories that can be generated by the system in \eqref{system_1}-\eqref{system_2} (corresponding to some disturbance sequence): 
\begin{eqnarray}\label{system_traj}
\mathbb{T}_\Sigma \!\!\!\! &\triangleq& \!\!\!\! \{(\{u(t)\}_{t\in {\mathbb Z}_+},\{y(t)\}_{t\in {\mathbb Z}_+},\{x(t)\}_{t\in {\mathbb Z}_+}) : \exists \{d(t)\}_{t\in {\mathbb Z}_+}\nonumber \\ 
\!\!\!\!\!\!\!\!&&\!\!\!\!\text{s.t.}  (\{u(t)\}_{t\in {\mathbb Z}_+},\{y(t)\}_{t\in {\mathbb Z}_+},\{x(t)\}_{t\in {\mathbb Z}_+}, \{d(t)\}_{t\in {\mathbb Z}_+}) \nonumber \\
\!\!\!\!&&\!\!\!\!\text{satisfies } \eqref{system_1}-\eqref{system_2}, \ \forall t \in {\mathbb Z}_+\}.
\end{eqnarray}
In order to be able to design a data-driven UIO, we want the historical data to be representative of the system. Therefore, our aim is to perform an experiment so that the two sets defined above actually coincide. 

All the subsequent analysis is carried out under the following: 

\smallskip
\noindent {\bf Assumption:}
{\color{black} The size $r$ of the unknown input is known and }the matrix $\begin{bmatrix}
U_p^\top &
D_p^\top &
X_p^\top 
\end{bmatrix}^\top$ is of full row rank, {\color{black} i.e., $m+r+n$}.  
\smallskip 

{\color{black} 
\begin{remark}
Clearly, as  the unknown input is not measurable,   %during the offline operations,
the previous assumption cannot be checked in practice.
 However, it is still reasonable to assume that 
 an offline experiment can be designed in such a way that 
 it holds. Indeed, if the system is reachable \cite{Kailath},
% If the pair $(A,B)$ is reachable \cite{Kailath}, thus implying that also the pair $(A,[B \ E])$ is so
and  the historical data $(\{u_d(t)\}_{t=0}^{T-2} \ \{d_d(t)\}_{t=0}^{T-2})$ are persistently exciting of order $n+1$, then by Corollary 2 in \cite{WillemsPE} the  Assumption holds. The control input can be chosen to this purpose, and for random disturbances 
this property generically holds. {\color{black} For what concerns the dimension of the unknown input, since it is related to the rank of the matrices of the input, state and output data, that are available, it can be deduced by performing repeated experiments and computing the (max) rank of matrices of the collected input, output and state data. }
\end{remark} }

\begin{lemma}\label{Tc_equiv_T} Under the Assumption on the collected data,
 the trajectories generated by the system $\Sigma$ in \eqref{system_1}-\eqref{system_2} are all and only those compatible with the given historical data, i.e., 
$$ \mathbb{T}_\Sigma = \mathbb{T}_c(u_d,y_d,x_d). $$
\end{lemma}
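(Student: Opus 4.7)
The plan is to prove the equality by double inclusion, leveraging the two structural identities satisfied by the historical data (since they were generated by $\Sigma$):
\begin{equation*}
X_f = A X_p + B U_p + E D_p, \qquad Y_p = C X_p.
\end{equation*}

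For the inclusion $\mathbb{T}_\Sigma \subseteq \mathbb{T}_c(u_d,y_d,x_d)$, take any trajectory $(\{u(t)\},\{y(t)\},\{x(t)\})\in \mathbb{T}_\Sigma$ and let $\{d(t)\}$ be a disturbance sequence certifying membership, so that \eqref{system_1}--\eqref{system_2} hold. This is where the Assumption is essential: because $\begin{bmatrix} U_p^\top & D_p^\top & X_p^\top\end{bmatrix}^\top$ has full row rank $m+r+n$, its image as a map from ${\mathbb R}^{T-1}$ is all of ${\mathbb R}^{m+r+n}$, so for every $t\in{\mathbb Z}_+$ there exists $g(t)\in{\mathbb R}^{T-1}$ with
\begin{equation*}
\begin{bmatrix} u(t) \\ d(t) \\ x(t) \end{bmatrix}
= \begin{bmatrix} U_p \\ D_p \\ X_p \end{bmatrix} g(t).
\end{equation*}
Substituting into the system equations and using the two structural identities, one immediately obtains $y(t) = C X_p g(t) = Y_p g(t)$ and $x(t+1) = A X_p g(t) + B U_p g(t) + E D_p g(t) = X_f g(t)$, which together with $u(t)=U_p g(t)$ and $x(t)=X_p g(t)$ gives \eqref{compatibility}.

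For the reverse inclusion $\mathbb{T}_c(u_d,y_d,x_d) \subseteq \mathbb{T}_\Sigma$, take any trajectory in $\mathbb{T}_c$ and, for each $t$, pick $g(t)$ realizing \eqref{compatibility}. Define the candidate disturbance $d(t) \triangleq D_p g(t)$. Using the two structural identities one more time,
\begin{equation*}
x(t+1) = X_f g(t) = A X_p g(t) + B U_p g(t) + E D_p g(t) = A x(t) + B u(t) + E d(t),
\end{equation*}
and $y(t) = Y_p g(t) = C X_p g(t) = C x(t)$, so \eqref{system_1}--\eqref{system_2} are satisfied with the constructed $d$; hence the trajectory lies in $\mathbb{T}_\Sigma$. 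Notice that this direction does not require the rank Assumption.

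I do not expect a serious obstacle here; the proof is essentially a bookkeeping exercise once the two structural identities for the historical data are written down. The only subtlety is recognizing that the Assumption is used \emph{only} for the inclusion $\mathbb{T}_\Sigma\subseteq \mathbb{T}_c$, where it guarantees that any admissible triple $(u(t),d(t),x(t))$ can be realized as a linear combination of historical data columns; without it, a genuine system trajectory could escape the column span of the data matrices and fail to be compatible. A minor point worth remarking on is that $g(t)$ need not be unique and is chosen pointwise in $t$, but this causes no difficulty since we are only asserting the existence of some compatible $g(t)$ (respectively, some $d(t)$).
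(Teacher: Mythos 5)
Your proof is correct and follows essentially the same route as the paper's: both directions rest on the identity $X_f = AX_p + BU_p + ED_p$, $Y_p = CX_p$ (the paper packages this as a single block-matrix identity), with the full-row-rank Assumption invoked only for $\mathbb{T}_\Sigma \subseteq \mathbb{T}_c$ via surjectivity, and the reverse inclusion obtained by setting $d(t) = D_p g(t)$. The only difference is the order in which the two inclusions are presented, which is immaterial.
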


\begin{proof} The proof bears similarities to the proof of Lemma 1 in \cite{Ferrari-Trecate}, but as previously mentioned our definition of $\mathbb{T}_c(u_d,y_d,x_d)$ is different. So, the proof is here provided for the sake of completeness.
{\color{black} We preliminarily observe that a triple $(\{u(t)\}_{t\in {\mathbb Z}_+},\{y(t)\}_{t\in {\mathbb Z}_+},\{x(t)\}_{t\in {\mathbb Z}_+})$ is a trajectory of $\Sigma$ if and only if it satisfies the following equation 
\be
\begin{bmatrix}
u(t) \\
y(t) \\
x(t) \\
x(t+1) 
\end{bmatrix} = 
\begin{bmatrix}
I & 0 & 0 \\
0 & 0 & C \\
0 & 0 & I \\
B & E & A
\end{bmatrix}
\begin{bmatrix}
u(t) \\
d(t) \\
x(t) 
\end{bmatrix}, \ \forall t \in \mathbb{Z}_+,
\label{prima}
\ee
for some $\{d(t)\}_{t\in {\mathbb Z}_+}$.
As the historical data have been generated by the system $\Sigma$, it clearly holds that 
\be
\begin{bmatrix}
U_p \\
Y_p \\
X_p \\
X_f
\end{bmatrix} = 
\begin{bmatrix}
I & 0 & 0 \\
0 & 0 & C \\
0 & 0 & I \\
B & E & A
\end{bmatrix}
\begin{bmatrix}
U_p \\
D_p \\
X_p 
\end{bmatrix}. 
\label{BEAdati}
\ee
We first show that $ \mathbb{T}_\Sigma \supseteq \mathbb{T}_c(u_d,y_d,x_d). $ 
If  $(\{u(t)\}_{t\in {\mathbb Z}_+},$ $\{y(t)\}_{t\in {\mathbb Z}_+},\{x(t)\}_{t\in {\mathbb Z}_+})  \in \mathbb{T}_c(u_d,y_d,x_d)$, then for every $t\in \mathbb{Z}_+$ there exists $g_t\in \mathbb{R}^{T-1}$ such that
\be
\begin{bmatrix}
u(t) \\
y(t) \\
x(t) \\
x(t+1) 
\end{bmatrix} = 
\begin{bmatrix}
U_p \\
Y_p \\
X_p \\
X_f
\end{bmatrix} g_t.
\label{gt}
\ee
Therefore, by making use of \eqref{BEAdati}, we get that \eqref{prima} holds for $d(t) = D_p g_t$.
Thus, $ \mathbb{T}_\Sigma \supseteq \mathbb{T}_c(u_d,y_d,x_d). $ }

We now prove that also the other inclusion holds, namely $ \mathbb{T}_\Sigma \subseteq \mathbb{T}_c(u_d,y_d,x_d). $ 
Since $\begin{bmatrix}
U_p^\top &
D_p^\top &
X_p^\top 
\end{bmatrix}^\top$ is of full row rank by Assumption, {\color{black} it defines a surjective map and hence} for every trajectory $(\{u(t)\}_{t\in {\mathbb Z}_+},\{y(t)\}_{t\in {\mathbb Z}_+},$ $\{x(t)\}_{t\in {\mathbb Z}_+})\in \mathbb{T}_\Sigma$ there  exists $\{g_t\}_{t\in {\mathbb Z}_+}$, taking values in $\mathbb{R}^{T-1}$, such that 
$$\begin{bmatrix}
u(t) \\
d(t) \\
x(t)  
\end{bmatrix} = \begin{bmatrix}
U_p \\
D_p \\
X_p  
\end{bmatrix}g_t, \qquad \forall t\in {\mathbb Z}_+.$$
Therefore, for every trajectory of $\Sigma$ we have  {\color{black}(by \eqref{BEAdati})}
\begin{eqnarray*}
\begin{bmatrix}
u(t) \\
y(t) \\
x(t) \\
x(t+1) 
\end{bmatrix} &=&  
\begin{bmatrix}
I & 0 & 0 \\
0 & 0 & C \\
0 & 0 & I \\
B & E & A
\end{bmatrix}
\begin{bmatrix}
u(t) \\
d(t) \\
x(t) 
\end{bmatrix} \\
&=& 
\begin{bmatrix}
I & 0 & 0 \\
0 & 0 & C \\
0 & 0 & I \\
B & E & A
\end{bmatrix} 
\begin{bmatrix}
U_p \\
D_p \\
X_p  
\end{bmatrix}g_t = \begin{bmatrix}
U_p \\
Y_p \\
X_p \\
X_f
\end{bmatrix} g_t, 
\end{eqnarray*}
which implies that 
{\color{black}$(\{u(t)\}_{t\in {\mathbb Z}_+},\{y(t)\}_{t\in {\mathbb Z}_+},\{x(t)\}_{t\in {\mathbb Z}_+}) \in  \mathbb{T}_c(u_d,y_d,x_d)$ and hence $\mathbb{T}_\Sigma \subseteq \mathbb{T}_c(u_d,y_d,x_d).
$}
\end{proof}

Let us define now the set of all the {\color{black} (input/output)} trajectories generated by the system  \eqref{UIO_eq1}-\eqref{UIO_eq2} as 
\begin{eqnarray}\label{UIO_traj}
\mathbb{T}_{\hat{\Sigma}} \!\!\!\!\! &\triangleq& \!\!\!\!\! \{(\{u(t)\}_{t\in {\mathbb Z}_+},\{y(t)\}_{t\in {\mathbb Z}_+},\{\hat x(t)\}_{t\in {\mathbb Z}_+}): \exists \{z(t)\}_{t\in {\mathbb Z}_+}\nonumber \\ 
\!\!\!\!&&\!\!\!\!\text{s.t.}  (\{u(t)\}_{t\in {\mathbb Z}_+},\{y(t)\}_{t\in {\mathbb Z}_+},\{\hat x(t)\}_{t\in {\mathbb Z}_+}, \{z(t)\}_{t\in {\mathbb Z}_+})  \nonumber \\
\!\!\!\!\!&&\!\!\!\!\! \text{ satisfies } \eqref{UIO_eq1}-\eqref{UIO_eq2} \  \forall t \in {\mathbb Z}_+\}.
\end{eqnarray}
% PARTE DA CAMBIARE
{\color{black} In the following proposition we provide necessary and sufficient conditions based on the historical data to guarantee that there exists a system $\hat \Sigma$, described as in \eqref{UIO_eq1}-\eqref{UIO_eq2}, such that all the
trajectories of   $\mathbb{T}_{\Sigma}$ are also trajectories of $\mathbb{T}_{\hat \Sigma}$. Moreover, we also relate such conditions to the rank constraint given in {\em (iii)}, point {\em (a)}, of Theorem \ref{cns_MB}. 
Despite the equivalence of {\em (i)} and  {\em (iii)} has been proved  in \cite{Ferrari-Trecate} (see, Lemma 2), 
it is here derived passing through condition   {\em (ii)}, that will  lead to 
 a parametrization of {\em all}  quadruples $(A_{UIO}, B_{UIO}^u, B_{UIO}^y, D_{UIO})$ describing a possible UIO for $\Sigma$ (see Corollary \ref{param}).
 % based on the historical data.
This is one of the major contributions of this paper
compared with \cite{Ferrari-Trecate}, where a single quadruple  $(A_{UIO}, B_{UIO}^u, B_{UIO}^y, D_{UIO})$   is derived from the historical data (see the comment about the uniqueness, after the proof of Theorem 1 in \cite{Ferrari-Trecate}).  
 Such specific quadruple $(A_{UIO}, B_{UIO}^u, B_{UIO}^y, D_{UIO})$
 represents a UIO if and only if 
  the matrix $A_{UIO}$ is Schur stable. However,  if this is not the case, it is not clear if the matrices of a UIO can be found by other means.
  In this paper, instead, we will prove  
  that if a UIO exists, then its matrices  
  can be found in our parametrization.  }
\medskip

\begin{proposition}\label{kernelincl}
Under the Assumption on the data,
{\color{black} the following facts are equivalent.
\begin{itemize}
\item[(i)] There exists a system $\hat \Sigma$ of the form \eqref{UIO_eq1}-\eqref{UIO_eq2} such that $\mathbb{T}_\Sigma \subseteq\mathbb{T}_{\hat{\Sigma}}$.
\item[(ii)] $\exists 
 \left[\begin{array}{c|c|c|c}
 T_1 & T_2 & T_3 & T_4
 \end{array}
 \right]\in {\mathbb R}^{n \times (m+2p+n)}$ s.t. 
 \be\label{Xf}
 X_f =  \left[\begin{array}{c|c|c|c}
 T_1 & T_2 & T_3 & T_4
 \end{array}
 \right]
\begin{bmatrix}
U_p \\
Y_p \\
Y_f \\
X_p 
\end{bmatrix}.
\ee 
\item[(iii)]
\be\label{ker_inclusion}
\ker{(X_f)}\supseteq \ker{\left(\begin{bmatrix}
U_p \\
Y_p \\
Y_f \\
X_p 
\end{bmatrix}\right)}.
\ee
\item[(iv)]   $\Sigma$ satisfies condition $\rank (CE)=\rank (E)=r$.
\end{itemize}
 }\end{proposition}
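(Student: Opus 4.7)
The plan is to prove the chain (ii) $\Leftrightarrow$ (iii), (i) $\Leftrightarrow$ (ii), and (iii) $\Leftrightarrow$ (iv), exploiting Lemma \ref{Tc_equiv_T} and the structural identity \eqref{BEAdati} that relates the data matrices to the (unobserved) disturbance sequence $D_p$.

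The equivalence (ii) $\Leftrightarrow$ (iii) follows from the standard linear-algebra fact that, for matrices with the same number of columns, $X_f$ factors as $[T_1|T_2|T_3|T_4][U_p^\top\ Y_p^\top\ Y_f^\top\ X_p^\top]^\top$ if and only if the row space of $X_f$ is contained in the row space of the stacked matrix, equivalently, their kernels are nested as in \eqref{ker_inclusion}.

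For (i) $\Leftrightarrow$ (ii), I would first substitute $z(t)=\hat x(t)-D_{UIO}y(t)$ into \eqref{UIO_eq1} to obtain the equivalent recursion
\[
\hat x(t+1)=A_{UIO}\hat x(t)+B_{UIO}^u u(t)+(B_{UIO}^y-A_{UIO}D_{UIO})\,y(t)+D_{UIO}\,y(t+1).
\]
Then $\mathbb T_\Sigma\subseteq\mathbb T_{\hat\Sigma}$ exactly says that every $(u,y,x)\in\mathbb T_\Sigma$ satisfies this equation with $\hat x=x$. For the ($\Rightarrow$) direction, applying this to the columns of the historical data and using $Y_f=CX_f$ gives \eqref{Xf} under the identifications $T_1=B_{UIO}^u$, $T_2=B_{UIO}^y-A_{UIO}D_{UIO}$, $T_3=D_{UIO}$, $T_4=A_{UIO}$. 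For the converse, given $T_1,\ldots,T_4$, I would define the UIO matrices through the same identifications; by Lemma \ref{Tc_equiv_T}, every trajectory of $\Sigma$ is compatible with the data, so for each $t$ there exists $g_t\in\mathbb R^{T-1}$ with $[u(t)^\top\ y(t)^\top\ x(t)^\top\ x(t+1)^\top]^\top=[U_p^\top\ Y_p^\top\ X_p^\top\ X_f^\top]^\top g_t$; since $y(t+1)=Cx(t+1)=Y_f g_t$, equation \eqref{Xf} applied to $g_t$ reproduces the displayed recursion for $x$, and setting $z(t)=x(t)-D_{UIO}y(t)$ completes the desired $\hat\Sigma$-trajectory.

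For (iii) $\Leftrightarrow$ (iv), I would substitute \eqref{BEAdati} to rewrite both kernels in terms of $[U_p^\top\ D_p^\top\ X_p^\top]^\top$: a vector $v$ lies in $\ker[U_p^\top\ Y_p^\top\ Y_f^\top\ X_p^\top]^\top$ exactly when $U_p v=0$, $X_p v=0$ (the condition $Y_p v=0$ being redundant since $Y_p=CX_p$), and $(CE)D_p v=0$; for such $v$ one has $X_f v=E D_p v$, which vanishes iff $D_p v=0$ because $E$ has full column rank. Thus (iii) is equivalent to the implication $(CE)D_p v=0\Rightarrow D_p v=0$ for every $v$ with $U_p v=X_p v=0$. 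By the full-row-rank Assumption on $[U_p^\top\ D_p^\top\ X_p^\top]^\top$, $D_p v$ attains every value in $\mathbb R^r$ as $v$ varies under these constraints, so the condition reduces to $\ker(CE)=\{0\}$, i.e.\ $\rank(CE)=r$, which together with the standing hypothesis $\rank(E)=r$ is exactly (iv). The most delicate part will be (i) $\Leftrightarrow$ (ii), where the change of variable between $z$ and $\hat x$ and the uniform-in-$t$ use of the compatibility coefficients $g_t$ must be handled carefully to ensure the linear relation extracted from the historical data truly governs every element of $\mathbb T_\Sigma$.
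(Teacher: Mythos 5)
Your proposal is correct, but it closes the equivalences along a different cycle than the paper, and two of your links are genuinely different arguments. The paper proves $(i)\Rightarrow(ii)\Rightarrow(iv)\Rightarrow(i)$ (plus $(ii)\Leftrightarrow(iii)$): its step $(ii)\Rightarrow(iv)$ extracts the matrix identity $[B\,|\,E\,|\,A]=[T_1|T_2|T_3|T_4]\cdot(\cdot)$ from the full-row-rank Assumption and reads off $E=T_3CE$, while its step $(iv)\Rightarrow(i)$ leans on the model-based machinery (Remark \ref{remark3}, i.e.\ the existence of matrices satisfying \eqref{SS_cond2}$\div$\eqref{SS_cond4} and the induced error dynamics $e(t+1)=A_{UIO}e(t)$ with $e(0)=0$). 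You instead prove $(iii)\Leftrightarrow(iv)$ directly, by restricting to the subspace $\{v: U_pv=0,\ X_pv=0\}$, on which $X_fv=ED_pv$ and $Y_fv=CED_pv$, and using the Assumption to make $D_pv$ sweep all of $\mathbb{R}^r$; and you prove $(ii)\Rightarrow(i)$ by the purely data-driven construction (define the UIO matrices from $T_1,\dots,T_4$, invoke Lemma \ref{Tc_equiv_T} to get the coefficients $g_t$ for an arbitrary trajectory of $\Sigma$, and set $z(t)=x(t)-D_{UIO}y(t)$) --- this is exactly the argument the paper defers to the proof of Corollary \ref{param}. Both routes are sound; yours has the merit of staying entirely within the data-driven framework and never invoking the model-based existence result, at the cost of front-loading into Proposition \ref{kernelincl} the construction that the paper reuses later for the parametrization. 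The only point to make explicit in a full write-up is the one both you and the paper treat implicitly in $(i)\Rightarrow(ii)$: the historical data form a finite-window trajectory, so one should note that it extends to an element of $\mathbb{T}_\Sigma$ before applying the inclusion $\mathbb{T}_\Sigma\subseteq\mathbb{T}_{\hat\Sigma}$ to it.
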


\begin{proof} {\color{black} The equivalence of {\em (ii)} and {\em (iii)} follows from standard Linear Algebra.}\\
%$\Leftarrow)$
%{\em Only if.} \ 
{\em (i)} $\Rightarrow$ {\em (ii)}\ 
 Suppose that there exists a system of the form \eqref{UIO_eq1}-\eqref{UIO_eq2} such that every trajectory $(\{u(t)\}_{t\in {\mathbb Z}_+},\{y(t)\}_{t\in {\mathbb Z}_+},\{x(t)\}_{t\in {\mathbb Z}_+}) \in  \mathbb{T}_\Sigma$ %= \mathbb{T}_c(u_d,y_d,x_d)}$
 % ={\color{red} \mathbb{T}_c^{ext}(u_d,y_d,x_d)}$ 
 satisfies also equations \eqref{UIO_eq1}-\eqref{UIO_eq2}, which means  that there exists $\{z(t)\}_{t\in {\mathbb Z}_+}$ s.t. $\forall t\in {\mathbb Z}_+$
\begin{eqnarray*}
z(t+1) &=& A_{UIO}z(t)+B_{UIO}^u u(t)+B_{UIO}^y y(t) \\
x(t) &=& z(t) + D_{UIO}y(t).
\end{eqnarray*}
%By Lemma \ref{T_ext}, we know that $\mathbb{T}_\Sigma = \mathbb{T}_c(u_d,y_d,x_d)=\mathbb{T}_c^{ext}(u_d,y_d,x_d).$ 
This %means that this
 holds, in particular, for the historical data $(u_d, y_d, x_d)$, implying that $\exists\  Z_p \triangleq \begin{bmatrix}
z(0) & \dots & z(T-2)
\end{bmatrix}$ and $Z_f \triangleq \begin{bmatrix}
z(1) & \dots & z(T-1)
\end{bmatrix}$, s.t. 
\begin{eqnarray*}
Z_f &=& A_{UIO}Z_p+B_{UIO}^u U_p+B_{UIO}^y Y_p \\
X_p &=& Z_p + D_{UIO}Y_p \\
X_f &=& Z_f +D_{UIO} Y_f. 
\end{eqnarray*}
This, in turn, implies that 
{\small\begin{align}
&X_f = A_{UIO}Z_p+B_{UIO}^u U_p+B_{UIO}^y Y_p +D_{UIO}Y_f \nonumber \\
\!\!&= A_{UIO}(X_p-D_{UIO}Y_p)+B_{UIO}^u U_p+B_{UIO}^y Y_p +D_{UIO}Y_f \nonumber \\
\!\!&=  \begin{bmatrix}
B_{UIO}^u \!&\! B_{UIO}^y-A_{UIO}D_{UIO} \!&\! D_{UIO} \!&\! A_{UIO}
 \end{bmatrix}\!\!
\begin{bmatrix}
U_p \\
Y_p \\
Y_f \\
X_p 
\end{bmatrix}\!\! \label{ciccio}
\end{align}}
and hence {\em (ii)} holds {\color{black}  for 
\begin{eqnarray}
T_1= B_{UIO}^u, && T_2 = B_{UIO}^y-A_{UIO}D_{UIO}, \label{daUIOaT1}\\
T_3= D_{UIO}, && T_4= A_{UIO}.
\label{daUIOaT2}
\end{eqnarray}

\noindent {\em (ii)} $\Rightarrow$ {\em (iv)} \
Suppose that \eqref{Xf} holds for suitable matrices $T_1, T_2, T_3,$ and $T_4.$
Since the data matrices $X_p, X_f, U_p, Y_p$ and $Y_f$ are generated by the system $\Sigma$, we can write
\be
X_f = [B \ | \ E \ | \ A]\begin{bmatrix}
U_p \\
D_p \\
X_p 
\end{bmatrix}
\label{new1}
\ee
as well as
\be
\begin{bmatrix}
U_p \\
Y_p \\
Y_f \\
X_p 
\end{bmatrix}
= \begin{bmatrix}
I_m & 0 & 0\cr 0 & 0 & C\cr
CB & CE & CA \cr
0 & 0 &I_n
\end{bmatrix}
 \begin{bmatrix}
U_p \\
D_p \\
X_p 
\end{bmatrix}.
\label{new2}
\ee
By replacing \eqref{new1} and \eqref{new2} in \eqref{Xf}   and by exploiting the Assumption, we deduce the identity
$$ [B \ | \ E \ | \ A] = 
\left[\begin{array}{c|c|c|c}
 T_1 & T_2 & T_3 & T_4
 \end{array}
 \right] \begin{bmatrix}
I_m & 0 & 0\cr 0 & 0 & C\cr
CB & CE & CA \cr
0 & 0 &I_n
\end{bmatrix},$$
which implies, in particular, that 
$E = T_3 CE$ and hence condition {\em (iv)} holds.
\smallskip

\noindent {\em (iv)} $\Rightarrow$ {\em (i)} \ If $\rank (CE)=\rank (E)=r$,
then there exist matrices $A_{UIO}, B_{UIO}^u, B_{UIO}^y$ and $D_{UIO}$
satisfying conditions \eqref{SS_cond2}$\div$\eqref{SS_cond4} (but not necessarily \eqref{SS_cond1}) (see Remark \ref{remark3}).
We want to prove that under these assumptions on its describing matrices, the system  $\hat \Sigma$ of equations 
\eqref{UIO_eq1}-\eqref{UIO_eq2} satisfies $T_{\Sigma} \subseteq T_{\hat \Sigma}$.
Clearly,  conditions \eqref{SS_cond2}$\div$\eqref{SS_cond4} ensure that
$e(t)= x(t) - \hat x(t)$ updates according to equation
$e(t+1)=A_{UIO} e(t)$. So,   proving that $T_{\Sigma} \subseteq T_{\hat \Sigma}$ amounts to proving that it is possible to choose $z(0)$ so that 
$e(0)= x(0)- \hat x(0)=0$.
In fact, by assuming  $z(0) = x(0) - D_{UIO} y(0)$  (see Remark 2 in \cite{Ferrari-Trecate})
we obtain $e(0)=0$.
}
\end{proof} 

To summarize, if the data we have collected satisfy the following conditions:\\
(a) The matrix $\begin{bmatrix}
U_p^\top &
D_p^\top &
X_p^\top 
\end{bmatrix}^\top$ is of full row rank; \\
(b) $\ker{(X_f)}\supseteq \ker{\left(\begin{bmatrix}
U_p \\
Y_p \\
Y_f \\
X_p 
\end{bmatrix}\right)}$, \\ then
we can construct a potential  UIO  described as in \eqref{UIO_eq1}-\eqref{UIO_eq2} for the original system.
In fact, such a system was called an {\em acceptor} in \cite{BehaviorObservers}, to explain the general concept  that
the system, given the available information, should not introduce additional constraints on the variable to be estimated other than those imposed by the original system itself. This amounts to saying that 
 if $(u,y,x)$ is an  input/output/state trajectory generated by the system in \eqref{system_1}-\eqref{system_2}, then corresponding to the input pair $(u,y)$ (the available information) the acceptor $\hat \Sigma$ should have $\hat x = x$ as one of its possible outputs.
 
An acceptor is not necessarily a UIO. For this to happen, we need to ensure also that if $(u,y, \hat x_1)$ and $(u,y,\hat x_2)$ are two trajectories in $\mathbb{T}_{\hat{\Sigma}}$, then $\lim_{t\to +\infty} \hat x_2(t)-\hat x_1(t)=0$.
This is the final step that will be addressed in Theorem \ref{UIO_final} below.

{\color{black}
\begin{theorem}\label{UIO_final}
Under the Assumption on the data, the following  facts are equivalent.
\begin{itemize}
\item[(i)] There exists a UIO $\hat \Sigma$ of the form \eqref{UIO_eq1}-\eqref{UIO_eq2} such that  $\mathbb{T}_\Sigma \subseteq\mathbb{T}_{\hat{\Sigma}}$.
 \item[(ii)] There exist matrices $T_1,T_2,T_3, T_4$ of suitable sizes such that \eqref{Xf} holds and $T_4$ is Schur stable.
\item[(iii)] Condition \eqref{ker_inclusion}  and condition
\be
{\rm rank} \begin{bmatrix}
z X_p - X_f\cr U_p \cr Y_p\end{bmatrix} = n+m +r, \ \forall\ z\in {\mathbb C}, |z|\ge 1,
\label{dd_rank}\ee
hold.
\item[(iv)] The triple $(A,E,C)$ is strong* detectable.
\end{itemize}
\end{theorem}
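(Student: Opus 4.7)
The plan is to close the equivalence by establishing (i)$\Leftrightarrow$(ii), (iii)$\Leftrightarrow$(iv), and (i)$\Leftrightarrow$(iv), leveraging Proposition \ref{kernelincl} and Theorem \ref{cns_MB}.

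For (i)$\Leftrightarrow$(ii) I would mirror the argument of Proposition \ref{kernelincl}. In the direction (i)$\Rightarrow$(ii), the derivation leading to \eqref{ciccio} already gives \eqref{Xf} with $T_4=A_{UIO}$; since $A_{UIO}$ must be Schur stable for $\hat\Sigma$ to be a UIO, so is $T_4$. For the converse, given $T_1,T_2,T_3,T_4$ satisfying \eqref{Xf} with $T_4$ Schur stable, I would set $A_{UIO}:=T_4$, $D_{UIO}:=T_3$, $B_{UIO}^u:=T_1$, $B_{UIO}^y:=T_2+T_4T_3$. Then the identity obtained in the (ii)$\Rightarrow$(iv) portion of Proposition \ref{kernelincl}, namely
\[
[B\mid E\mid A]=[T_1\mid T_2\mid T_3\mid T_4]\begin{bmatrix}I_m & 0 & 0\\ 0 & 0 & C\\ CB & CE & CA\\ 0 & 0 & I_n\end{bmatrix},
\]
read row-by-row yields exactly \eqref{SS_cond2}--\eqref{SS_cond4}; together with Schur stability of $A_{UIO}$, this furnishes a bona fide UIO. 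The trajectory inclusion $\mathbb{T}_\Sigma\subseteq\mathbb{T}_{\hat\Sigma}$ is then obtained, as in the (iv)$\Rightarrow$(i) portion of Proposition \ref{kernelincl}, by choosing $z(0)=x(0)-D_{UIO}y(0)$ so that $e(0)=0$ and hence $e(t)\equiv 0$.

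For (iii)$\Leftrightarrow$(iv), Proposition \ref{kernelincl} already equates \eqref{ker_inclusion} with condition (iii)(a) of Theorem \ref{cns_MB}, so it remains to equate \eqref{dd_rank} with condition (iii)(b). Using $X_f=AX_p+BU_p+ED_p$ and $Y_p=CX_p$, I would write
\[
\begin{bmatrix} zX_p-X_f\\ U_p\\ Y_p\end{bmatrix}=\begin{bmatrix} zI_n-A & -B & -E\\ 0 & I_m & 0\\ C & 0 & 0\end{bmatrix}\begin{bmatrix} X_p\\ U_p\\ D_p\end{bmatrix}.
\]
The Assumption ensures the right factor has full row rank $n+m+r$, so the rank of the product equals the rank of the first factor. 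Elementary column operations (using the $I_m$ block to clear $-B$) give that this rank equals $m+\rank\bigl[\begin{smallmatrix}zI_n-A & -E\\ C & 0\end{smallmatrix}\bigr]$, whence \eqref{dd_rank} holds for all $|z|\ge 1$ iff condition (iii)(b) of Theorem \ref{cns_MB} does.

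Finally, for (i)$\Leftrightarrow$(iv), Theorem \ref{cns_MB} gives the equivalence between the existence of a UIO in the classical sense and strong* detectability of $(A,E,C)$; as observed in the previous paragraph, the added trajectory-inclusion clause does not restrict the pool of admissible UIOs, since for any valid UIO the initialization $z(0)=x(0)-D_{UIO}y(0)$ achieves $e(0)=0$ and hence $\hat x(t)=x(t)$ along any $(u,y,x)\in\mathbb{T}_\Sigma$.

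The main obstacle I anticipate is the column-reduction step in the rank computation linking \eqref{dd_rank} to condition (iii)(b): one has to verify carefully that the Assumption really propagates through the product so that the rank of the product equals the rank of the first factor, and that the column operations do not silently absorb the $z$-dependence. Everything else is a fairly direct reorganization of material already established in Proposition \ref{kernelincl} and Theorem \ref{cns_MB}.
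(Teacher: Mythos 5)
Your proof is correct, and all the individual pieces check out: the factorization
\[
\begin{bmatrix} zX_p-X_f\\ U_p\\ Y_p\end{bmatrix}=\begin{bmatrix} zI_n-A & -B & -E\\ 0 & I_m & 0\\ C & 0 & 0\end{bmatrix}\begin{bmatrix} X_p\\ U_p\\ D_p\end{bmatrix}
\]
together with the full row rank of the right factor does reduce \eqref{dd_rank} to condition \emph{(iii)(b)} of Theorem \ref{cns_MB} (note only that clearing $-B$ via the $I_m$ block is a row, not a column, operation in your arrangement --- immaterial to the rank), and the map $A_{UIO}=T_4$, $D_{UIO}=T_3$, $B_{UIO}^u=T_1$, $B_{UIO}^y=T_2+T_4T_3$ does recover \eqref{SS_cond2}$\div$\eqref{SS_cond4} from the identity $[B\,|\,E\,|\,A]=[T_1\,|\,T_2\,|\,T_3\,|\,T_4]\,M$. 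Where you genuinely depart from the paper is in how \emph{(ii)} is linked to the rest. The paper runs the single cycle \emph{(i)}$\Rightarrow$\emph{(ii)}$\Rightarrow$\emph{(iii)}$\Rightarrow$\emph{(iv)}$\Rightarrow$\emph{(i)}, and its most technical step is \emph{(ii)}$\Rightarrow$\emph{(iii)}: it first shows $\rank[zX_p-X_f;\,U_p;\,Y_p]=\rank[zX_p-X_f;\,U_p;\,Y_p;\,Y_f]$ for all $z$, then factors the augmented matrix through $\begin{bmatrix}-T_1 & -T_2 & -T_3 & zI-T_4\end{bmatrix}$ stacked over an identity block and uses the Schur stability of $T_4$ to conclude that the rank is $n+m+r$ for $|z|\ge 1$ --- a purely data-level derivation of \eqref{dd_rank}. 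You bypass that step entirely by proving \emph{(ii)}$\Rightarrow$\emph{(i)} directly (constructing the UIO matrices from the $T_i$ and verifying the Sylvester-type conditions) and obtaining \emph{(iii)}$\Leftrightarrow$\emph{(iv)} as a rank identity that never involves the $T_i$. Your route is shorter, leans more heavily on the model-based Theorem \ref{cns_MB}, and has the side benefit of making explicit the correspondence $(T_1,T_2,T_3,T_4)\leftrightarrow(A_{UIO},B_{UIO}^u,B_{UIO}^y,D_{UIO})$ that the paper only records later in Corollary \ref{param}; the paper's route buys a self-contained, data-only verification of how the spectral condition \eqref{dd_rank} emerges from the existence of a Schur $T_4$, which is closer to the data-driven spirit of the result.
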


\begin{proof}
\emph{(i)} $\Rightarrow$ \emph{(ii)}. If there exists a UIO $\hat \Sigma$ of the form \eqref{UIO_eq1}-\eqref{UIO_eq2}  such that  $\mathbb{T}_\Sigma \subseteq\mathbb{T}_{\hat{\Sigma}}$, then we can refer to the proof of Proposition \ref{kernelincl}   to claim that 
\eqref{Xf}  holds (see \eqref{ciccio})  with $T_1, T_2, T_3$ and $T_4$ as in 
\eqref{daUIOaT1} and \eqref{daUIOaT2}. Since $T_4=A_{UIO}$, clearly $T_4$ is Schur stable.

\noindent
\emph{(ii)} $\Rightarrow$ \emph{(iii)}.  
From Proposition \ref{kernelincl}  we know that 
the existence of $T_1,T_2,T_3, T_4$ such that \eqref{Xf} holds implies that 
\eqref{ker_inclusion} holds and that $\rank (CE)=\rank (E)=r$. To prove the second part of {\em (iii)},
 we preliminarily show that 
$$
\rank{\begin{bmatrix}
zX_p-X_f \\ U_p \\ Y_p
\end{bmatrix}} = 
\rank{\begin{bmatrix}
zX_p-X_f \\ U_p \\ Y_p\\
Y_f 
\end{bmatrix}}, \ \forall z \in \mathbb C.$$ 
Indeed, for every $z\in\mathbb C$ 
\begin{eqnarray*}
\!\!\!\!\!\!\!\!\!\!\!\!&&\rank{\begin{bmatrix}
zX_p-X_f \\ U_p \\ Y_p\\
Y_f 
\end{bmatrix}} \!\!= \!
\rank{\left(
\begin{bmatrix}
 -B & -E  & zI-A \\
I & 0 & 0 \\
0 & 0 & C\\
CB & CE & CA
\end{bmatrix}\!\!
\begin{bmatrix}
U_p \\
D_p\\
X_p
\end{bmatrix}
\!\right)} \\
%\begin{bmatrix}
%I & 0 & 0 & 0 \\
%0 & I & 0 & 0 \\
%C & -zI & I & 0 \\
%0 & 0 & 0 & I
%\end{bmatrix}\cdot \right.} \end{eqnarray*}%\\
%\begin{eqnarray*}
%&& \left. \cdot
%\begin{bmatrix}
%zI-A & -B & -E \\
%C & 0 & 0 \\
%CA & CB & CE \\
%0 & I & 0
%\end{bmatrix}
%\begin{bmatrix}
%X_p \\
%U_p \\
%D_p
%\end{bmatrix}
%\right)
\!\!\!\!\!\!\!\!\!\!\!\!&&=\rank{\left(
\begin{bmatrix}
 I & 0 & 0 & 0 \\
0 & I & 0 & 0 \\
0 & 0 & I & 0 \\
C & 0 & -zI & I
\end{bmatrix}\!\!
\begin{bmatrix}
 -B & -E  & zI-A \\
I & 0 & 0 \\
0 & 0 & C\\
CB & CE & CA
\end{bmatrix}\!\!
\begin{bmatrix}
U_p \\
D_p\\
X_p
\end{bmatrix}\!
\right)} \\
\!\!\!\!\!\!\!\!\!\!\!\!&&= 
\rank{\left(\!\begin{bmatrix}
 -B & -E  & zI-A \\
I & 0 & 0 \\
  0 & 0 & C\\
0 & 0 & 0 \\
\end{bmatrix}\!\!\!
\begin{bmatrix}
U_p \\
D_p\\
X_p
\end{bmatrix}\!\!
\right)} \!\!=\! \rank{\!\!\begin{bmatrix}
zX_p-X_f \\ U_p \\ Y_p
\end{bmatrix}}.
\end{eqnarray*}
%where  %in the first and second equalities 
%we have exploited
%% the nonsingularity of the premultiplying matrix and 
%the Assumption. \\
On the other hand, by exploiting condition {\em (ii)}, we obtain
%\begin{eqnarray*}
%\!\!\!\!\!\!\!\!&&
$$\begin{bmatrix}
zX_p-X_f \\
U_p \\
Y_p \\
Y_f
\end{bmatrix} = \begin{bmatrix}
- T_1 & -T_2 & -T_3 & zI - T_4\cr
I & 0 & 0 & 0\cr
0 & I & 0 & 0\cr
0 & 0 & I &0\end{bmatrix}
\begin{bmatrix}
U_p \\
Y_p \\
Y_f \\
X_p 
\end{bmatrix}.
$$
Since $T_4$ is Schur, it follows that the rank of the matrix on the left coincides with 
$\rank{\begin{bmatrix}
U_p^\top &
Y_p^\top &
Y_f^\top &
X_p^\top 
\end{bmatrix}^\top}$
 for every $z \in \mathbb C$ with $|z|\ge 1$. Finally,
 $$\rank{\begin{bmatrix}
U_p \\
Y_p \\
Y_f \\
X_p 
\end{bmatrix}} \!\!=\rank\left(\!
\begin{bmatrix} I &0 &0\cr 0 & 0 &C\cr
CB & CE & CA\cr
0 & 0 & I\end{bmatrix} \!\!\begin{bmatrix} U_p\cr D_p\cr X_p\end{bmatrix}\!\!\right) \!\!=\! n+m+r,$$
where we exploited the Assumption and the fact that $\rank{(CE)} = \rank{(E)} = r$. 
 
\noindent
\emph{(iii)} $\Rightarrow$ \emph{(iv)}. In Proposition \ref{kernelincl} we proved that condition \eqref{ker_inclusion} is equivalent to
condition {\em (iv)}, point {\em (a)}, of Theorem \ref{cns_MB}, namely to 
$\rank (CE)=\rank (E)=r$. \\
On the other hand, it is easy to see that
$$\begin{bmatrix}
z X_p - X_f\cr U_p \cr Y_p\end{bmatrix} \!\!=\!\! \begin{bmatrix}
- B & - E & z I_n -A\cr I_m &0 &0\cr 0 & 0 &C\end{bmatrix}\begin{bmatrix}
U_p \\ D_p \\ X_p
\end{bmatrix},$$
and, as result of the Assumption, for every $z\in {\mathbb C}$,
$$\rank\!\left(\begin{bmatrix}
z X_p - X_f\cr U_p \cr Y_p\end{bmatrix}\right) \!\!=\! \rank\left(\!\begin{bmatrix}
- B & - E & z I_n -A\cr I_m &0 &0\cr 0 & 0 &C\end{bmatrix}\!\right)$$
$$=
m +  \rank\left(\begin{bmatrix}
 - E & z I_n -A\cr  0 &C\end{bmatrix}\right).$$
 Consequently, condition \eqref{dd_rank} is equivalent to condition {\em (iv)}, point {\em (b)}, of Theorem \ref{cns_MB}.
Thus, by Theorem \ref{cns_MB} the system 
$\Sigma$ is strong* detectable.

\noindent
\emph{(iv)} $\Rightarrow$ \emph{(i)}. If {\em (iv)} holds, by Theorem \ref{cns_MB} we know that there exists a UIO $\hat \Sigma$ for $\Sigma$ described as
in 
\eqref{UIO_eq1}-\eqref{UIO_eq2}. 
By the proof of \emph{(iv)} $\Rightarrow$ \emph{(i)}
in Proposition \ref{kernelincl}, we can claim 
 that $T_{\Sigma} \subseteq T_{\hat \Sigma}$.
\end{proof}

{\color{black} The previous theorem  gives a complete answer to the question of whether it is possible to design a UIO based only on some available data. 
% The previous theorem provides an exhaustive answer to the problem we are considering in this paper. 
Indeed, condition {\em (iii)} provides a way to check a priori on the collected data if a data-driven UIO exists. In addition, the same condition is shown to be equivalent to condition {\em (iv)}, meaning that, under the Assumption on the data, solving the problem via a data-driven approach does not introduce additional constraints with respect to those obtained in the model-based formulation. 
Furthermore, once the UIO existence  has been ascertained, we can exploit the fact of having introduced condition {\em (ii)} in Proposition \ref{kernelincl} to relate the solutions of equation \eqref{Xf} to all the possible quadruples $(A_{UIO}, B_{UIO}^u, B_{UIO}^y, D_{UIO})$ satisfying \eqref{SS_cond1}$\div$\eqref{SS_cond4}. This will be the subject of the following corollary.}

\begin{corollary}\label{param}
Under the Assumption on the data, if any of the equivalent conditions of Theorem \ref{UIO_final} holds, then
 there is a bijective correspondence between the matrices 
$(A_{UIO}, B_{UIO}^u,B_{UIO}^y, D_{UIO})$ describing a  UIO $\hat \Sigma$  and the matrices $T_1, T_2, T_3,$ and $T_4$, with $T_4$ Schur, such 
that \eqref{Xf} holds. \end{corollary}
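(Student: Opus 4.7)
The plan is to make the linear bijection implicit in the proof of Proposition \ref{kernelincl} fully explicit and check surjectivity by constructing an inverse. In the forward direction, the proof of \emph{(i)} $\Rightarrow$ \emph{(ii)} of Proposition \ref{kernelincl} already exhibits the map
$$\Phi:(A_{UIO},B_{UIO}^u,B_{UIO}^y,D_{UIO})\mapsto (T_1,T_2,T_3,T_4)$$
defined by \eqref{daUIOaT1}--\eqref{daUIOaT2}. If $(A_{UIO},B_{UIO}^u,B_{UIO}^y,D_{UIO})$ describes a UIO, then $T_4=A_{UIO}$ is Schur by \eqref{SS_cond1}, and the computation in \eqref{ciccio} shows that the image satisfies \eqref{Xf}. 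So $\Phi$ is well-defined into the admissible $T$-quadruples.

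Injectivity of $\Phi$ is immediate from the recovery formulas: given the image one reads off $A_{UIO}=T_4$, $D_{UIO}=T_3$, $B_{UIO}^u=T_1$, and $B_{UIO}^y=T_2+T_4T_3$. This suggests the candidate inverse $\Psi(T_1,T_2,T_3,T_4):=(T_4,\,T_1,\,T_2+T_4T_3,\,T_3)$.

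For surjectivity I would start from an admissible $(T_1,T_2,T_3,T_4)$ with $T_4$ Schur and \eqref{Xf} satisfied, set $(A_{UIO},B_{UIO}^u,B_{UIO}^y,D_{UIO}):=\Psi(T_1,T_2,T_3,T_4)$, and verify conditions \eqref{SS_cond1}--\eqref{SS_cond4}. Condition \eqref{SS_cond1} holds by hypothesis on $T_4$. For the remaining three, the key step is to substitute \eqref{new1} and \eqref{new2} into \eqref{Xf} and then cancel the right factor $[U_p^\top\ D_p^\top\ X_p^\top]^\top$ using the full-row-rank Assumption, obtaining the matrix identity
$$[B\mid E\mid A]=\left[T_1\mid T_2\mid T_3\mid T_4\right]\begin{bmatrix}I_m & 0 & 0\\ 0 & 0 & C\\ CB & CE & CA\\ 0 & 0 & I_n\end{bmatrix}.$$
Matching block columns gives $B=T_1+T_3CB$, $E=T_3CE$, and $A=T_2C+T_3CA+T_4$. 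The first yields \eqref{SS_cond3}, the second is precisely \eqref{SS_cond2}, and the third rearranges to $T_4=(I-T_3C)A-T_2C$.

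The only non-trivial verification is \eqref{SS_cond4}: with $A_{UIO}=T_4$, $D_{UIO}=T_3$, and $B_{UIO}^y=T_2+T_4T_3$, a direct computation gives
$$A_{UIO}(I-D_{UIO}C)+B_{UIO}^yC=T_4-T_4T_3C+T_2C+T_4T_3C=T_4+T_2C=(I-T_3C)A,$$
which is exactly \eqref{SS_cond4}. This is the main (though mild) obstacle, because $B_{UIO}^y$ is recovered indirectly as $T_2+T_4T_3$ rather than read off a single block column. With all four conditions established, $\Psi(T_1,T_2,T_3,T_4)$ describes a UIO and $\Phi\circ\Psi=\mathrm{id}$, $\Psi\circ\Phi=\mathrm{id}$ by construction, completing the bijection.
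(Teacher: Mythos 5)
Your proof is correct, and in the surjectivity direction it takes a genuinely different (and arguably more direct) route than the paper. The paper defines the same inverse map \eqref{Auio}--\eqref{Duio} but then verifies that the resulting quadruple describes an \emph{acceptor}, i.e.\ that $\mathbb{T}_\Sigma \subseteq\mathbb{T}_{\hat{\Sigma}}$: it takes an arbitrary compatible trajectory, expresses it through a vector $g_t$ as in \eqref{gt}, shows $y(t)=Cx(t)$ and $y(t+1)=Y_fg_t$, and exhibits the internal state $z(t)=x(t)-D_{UIO}y(t)$ satisfying \eqref{UIO_eq1}--\eqref{UIO_eq2}; the UIO claim then follows from $T_4=A_{UIO}$ being Schur. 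You instead cancel the full-row-rank factor $[U_p^\top\ D_p^\top\ X_p^\top]^\top$ (exactly as the paper does in the proof of \emph{(ii)} $\Rightarrow$ \emph{(iv)} of Proposition \ref{kernelincl}) to extract the three block identities $B=T_1+T_3CB$, $E=T_3CE$, $A=T_2C+T_3CA+T_4$, and from these verify the model-based conditions \eqref{SS_cond1}$\div$\eqref{SS_cond4} directly; your algebra for \eqref{SS_cond4} checks out. What your route buys is an immediate identification of the $T$-parametrization with the classical conditions \eqref{SS_cond1}$\div$\eqref{SS_cond4}, which is precisely what ``describing a UIO'' means here, bypassing the trajectory-set argument entirely; what the paper's route buys is that it never needs to re-invoke the model-based characterization of a UIO, staying within the behavioral/acceptor framework of Proposition \ref{kernelincl}. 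Two minor remarks: in the forward direction, citing \eqref{ciccio} implicitly uses that a UIO quadruple is an acceptor (so that $Z_p$, $Z_f$ exist), which follows from \eqref{SS_cond2}$\div$\eqref{SS_cond4} via the \emph{(iv)} $\Rightarrow$ \emph{(i)} step of Proposition \ref{kernelincl} --- worth one sentence, or an equally short purely algebraic verification of \eqref{Xf} from \eqref{new1}--\eqref{new2}; and note that your verification, like the paper's proof of \emph{(ii)} $\Rightarrow$ \emph{(iv)}, uses the true system matrices as a proof device only, so no data-driven character is lost.
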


\begin{proof} We will prove that  there is a bijective correspondence between the matrices 
$(A_{UIO}, B_{UIO}^u,B_{UIO}^y, D_{UIO})$ describing a  system $\hat \Sigma$ for which $\mathbb{T}_\Sigma \subseteq\mathbb{T}_{\hat{\Sigma}}$ and the matrices $T_1, T_2, T_3,$ and $T_4$ such 
that \eqref{Xf} holds.
Since it is always true that $T_4=A_{UIO}$, the corollary statement immediately follows. \smallskip

 From the proof of {\em (i)} $\Rightarrow$ {\em (ii)} in Proposition \ref{kernelincl}, we have seen that  every 
quadruple of matrices $(A_{UIO}, B_{UIO}^u, B_{UIO}^y, D_{UIO})$  describing a  system   \eqref{UIO_eq1}-\eqref{UIO_eq2} such that $\mathbb{T}_\Sigma \subseteq\mathbb{T}_{\hat{\Sigma}}$ 
identifies (through \eqref{daUIOaT1} and \eqref{daUIOaT2})
a quadruple $(T_1, T_2, T_3,T_4)$  such 
that \eqref{Xf} holds.\\
Conversely,  suppose that  
$\exists 
 (T_1, T_2, T_3,T_4)$ s.t. \eqref{Xf} holds.
 Now, set 
\begin{eqnarray}
A_{UIO}&\triangleq& T_4 , \label{Auio} \\ 
B_{UIO}^u &\triangleq& T_1,  \ \
B_{UIO}^y  \triangleq T_2+T_4T_3, \ \
D_{UIO} \triangleq T_3, \qquad \label{Duio}
\end{eqnarray}
yielding 
$$
X_f \!=\!  \left[\!\!\!\begin{array}{c|c|c|c}
B_{UIO}^u \!&\! B_{UIO}^y-A_{UIO}D_{UIO} \!&\! D_{UIO} \!&\! A_{UIO}
 \end{array}
 \!\!\!\right] \!\!
\begin{bmatrix}
U_p \\
Y_p \\
Y_f \\
X_p 
\end{bmatrix}.
$$
We want to prove that such matrices describe a system \eqref{UIO_eq1}-\eqref{UIO_eq2} such that $\mathbb{T}_\Sigma \subseteq\mathbb{T}_{\hat{\Sigma}}$ holds. To this end we preliminarily note that
if $(\{u(t)\}_{t\in {\mathbb Z}_+},\{y(t)\}_{t\in {\mathbb Z}_+},$ $\{x(t)\}_{t\in {\mathbb Z}_+})\in\mathbb{T}_c(u_d,y_d,x_d)$, then
for every $t\in {\mathbb Z}_+$ there exists $g_t\in {\mathbb R}^{T-1}$ such that  \eqref{gt} holds.
This implies that $y(t) = Y_p g_t = C X_p g_t = C x(t)$ for every $t\in {\mathbb Z}_+$, and hence
$y(t+1)= C x(t+1)= CX_f g_t = Y_f g_t$ for every $t\in {\mathbb Z}_+$.
 Therefore, for every trajectory $(\{u(t)\}_{t\in {\mathbb Z}_+},\{y(t)\}_{t\in {\mathbb Z}_+},$ $\{x(t)\}_{t\in {\mathbb Z}_+})\in\mathbb{T}_c(u_d,y_d,x_d)$, it holds 
\begin{eqnarray*}
x(t+1) \!\!\!\!\!&=&\!\!\!\!\! A_{UIO}x(t)\!+\!B_{UIO}^uu(t)\!+\!(B_{UIO}^y-A_{UIO}D_{UIO})y(t) \\
\!\!\!\!\!&+&\!\!\!\!\!D_{UIO}y(t+1).
\end{eqnarray*}
Define
$z(t) \triangleq x(t) - D_{UIO}y(t).$
Then the trajectory $(\{u(t)\}_{t\in {\mathbb Z}_+},\{y(t)\}_{t\in {\mathbb Z}_+},\{x(t)\}_{t\in {\mathbb Z}_+},\{z(t)\}_{t\in {\mathbb Z}_+})$ satisfies 
\begin{eqnarray*}
x(t) \!\!\!&=&\!\!\! z(t) + D_{UIO}y(t) \\
z(t+1) \!\!\!&=&\!\!\! x(t+1) - D_{UIO}y(t+1) \\
\!\!\!&=&\!\!\! A_{UIO}x(t) + B_{UIO}^u u(t) \\
\!\!\!&+&\!\!\! (B_{UIO}^y-A_{UIO}D_{UIO})y(t) \\
\!\!\!&=&\!\!\! A_{UIO}z(t)+B_{UIO}^u u(t) + B_{UIO}^y y(t).
\end{eqnarray*}
This proves that $\forall (\{u(t)\}_{t\in {\mathbb Z}_+},\{y(t)\}_{t\in {\mathbb Z}_+},\{x(t)\}_{t\in {\mathbb Z}_+})\in\mathbb{T}_c(u_d,y_d,x_d)$ there exists $\{z(t)\}_{t\in {\mathbb Z}_+}$ such that $(\{u(t)\}_{t\in {\mathbb Z}_+},\{y(t)\}_{t\in {\mathbb Z}_+},\{x(t)\}_{t\in {\mathbb Z}_+},\{z(t)\}_{t\in {\mathbb Z}_+})$ satisfies equations \eqref{UIO_eq1}-\eqref{UIO_eq2}, and hence 
$(A_{UIO}, B_{UIO}^u, B_{UIO}^y, D_{UIO})$  describes a  system   \eqref{UIO_eq1}-\eqref{UIO_eq2} for which $\mathbb{T}_\Sigma \subseteq\mathbb{T}_{\hat{\Sigma}}$.
\end{proof} 
}

\section{{\color{black} A simplified way to compute the problem solution}} \label{concl_rem}

{\color{black}  The two conditions given in {\em (iii)} of Theorem \ref{UIO_final} 
provide a practical way to check on data whether the problem of designing a UIO is solvable. However, 
such conditions do not lead to an explicit solution, namely to a quadruple of matrices 
 $(T_1,T_2,T_3, T_4)$,  with $T_4$   Schur stable, such that \eqref{Xf} holds.
 To this end we can replace equation \eqref{Xf} with a simpler equivalent equation.}
\\
We can observe that $Y_p = CX_p$  since the data have been generated by %the system 
$\Sigma$. 
As a consequence of the Assumption, the matrix $X_p$ is of full row rank. This implies that $C$ can be uniquely recovered from the data as
$$C = Y_p X_p^\dag = Y_p X_p^\top (X_p X_p^\top)^{-1}.$$
Moreover, equation \eqref{Xf} is equivalent to 
\begin{eqnarray}
X_f &=& \left[\begin{array}{c|c|c|c}
 T_1 & T_2 & T_3 & T_4
 \end{array}
 \right]
 \begin{bmatrix}
 I & 0 & 0 \\
 0 & 0 & C \\
 0 & I & 0 \\
 0 & 0 & I
 \end{bmatrix}
\begin{bmatrix}
U_p \\
Y_f \\
X_p 
\end{bmatrix} \nonumber\\
&=& \left[\begin{array}{c|c|c} 
 T_1 & T_3 & T_4 + T_2 C
 \end{array}
 \right]
 \begin{bmatrix}
U_p \\
Y_f \\
X_p
\end{bmatrix}. \label{detectable_pair}
\end{eqnarray}
Therefore, $A_{UIO}=T_4$ can be a Schur stable matrix if and only if there exists a triple $(T_1,T_3, T^*)$ that solves the equation:
 \be
 \label{Xfshort}
 X_f = \left[\begin{array}{c|c|c} 
 T_1 & T_3 & T^*
 \end{array}
 \right]
  \begin{bmatrix}
U_p \\
Y_f \\
X_p
\end{bmatrix}
\ee
such that the pair $(T^*,C)$  is detectable in the sense of discrete-time linear systems. Indeed,  this amounts to saying that $\exists T_2$ such that $T^*-T_2C=T_4=A_{UIO}$ is Schur stable. 
\\
{\color{black}In a recent paper \cite{FDI_Giulio_arXiv}, an algorithm to explicitly determine (if it exists) a triple $(T_1, T_3, T^*)$, with $(T^*,C)$ detectable, 
that solves \eqref{Xfshort} is proposed. This provides a practical way to design a UIO from data, under the assumption that the two conditions
given in {\em (iii)} of Theorem \ref{UIO_final}  hold.
We refer the interested reader to 
\cite{FDI_Giulio_arXiv}.\\
To conclude the paper, we provide a numerical example that illustrates how it is possible  to design a UIO
both from a model-based perspective and from data.
}
%Clearly, equation \eqref{Xfshort} can be solved by resorting to standard algebraic techniques, thus obtaining the explicit parametrization:
%$$
% \left[\begin{array}{c|c|c} 
% T_1 & T_3 & T^*
% \end{array}
% \right] =  X_f   \begin{bmatrix}
%U_p \\
%Y_f \\
%X_p
%\end{bmatrix}^\dag + P  \left(I-   \begin{bmatrix}
%U_p \\
%Y_f \\
%X_p
%\end{bmatrix}  \begin{bmatrix}
%U_p \\
%Y_f \\
%X_p
%\end{bmatrix}^\dag\right),
%$$
%where $P$ is an arbitrary matrix of size $n \times (m+p+n)$.
 
 {\color{black} 
 \begin{example}
 Assume (as in \cite{Darouach}, Example 2)
$$A=\begin{bmatrix} -1 & -1 & 0\cr -1& 0 & 0\cr 0 & -1 & -1\end{bmatrix}, \quad 
C = \begin{bmatrix} 1 & 0 & 0\cr 0 & 0 &1\end{bmatrix}, \quad
E = \begin{bmatrix}
-1 \cr 0 \cr 0\end{bmatrix}.$$
The matrix $B$ is omitted since the presence of the known input $u$ can be easily handled without requiring additional design steps.  \\
We first solve the problem by adopting a model-based approach.
It is a matter of elementary calculations to verify that conditions  (a) and   (b) in point  (iii) of Theorem \ref{cns_MB} hold, and hence a UIO exists.
In fact, 
$$
{\rm rank} \begin{bmatrix} zI_n - A & -E\cr
C & 0\end{bmatrix} = n+r, \ \forall z\in {\mathbb C},$$
and hence the triple $(A,E,C)$ is  not only strong* detectable, but also strong* observable.
 The set of matrices 
$D_{UIO}$ such that \eqref{SS_cond2} holds can be described as
\be
D_{UIO} = \begin{bmatrix} 1 & a\cr 0 &b \cr 0 &c\end{bmatrix}, \quad a,b,c \in {\mathbb R}.
\label{paramDUIO}
\ee
In order to find matrices $A_{UIO}$ and $B_{UIO}^y$ such that \eqref{SS_cond1} and  \eqref{SS_cond4} hold, we can rewrite 
\eqref{SS_cond4} as
$
A_{UIO}= (I-D_{UIO}C)A - L C,$
where $L = B_{UIO}^y- A_{UIO} D_{UIO}$.
So, we can first choose $D_{UIO}$ as in \eqref{paramDUIO} in such a way that 
$((I-D_{UIO}C)A, C)$ is either observable or at least detectable.
Then we can choose $L$ so that $A_{UIO}= (I-D_{UIO}C)A - L C$ is Schur, and finally determine $B_{UIO}^y$ from $L$.
It turns out that
$$((I-D_{UIO}C)A, C) = \left(\begin{bmatrix} 0 & a & a\cr -1 & b & b\cr
0 & c-1 & c-1\end{bmatrix}, \begin{bmatrix} 1 & 0 & 0\cr 0 & 0 &1\end{bmatrix}\right)$$
is observable if and only if either $a\ne 0$ or $c\ne 1$. While if $a=0$ and $c=1$, then the pair is detectable if and only if $|b|< 1$. Therefore, we can essentially distinguish between two cases: 
\begin{itemize}
\item[1)] $a\ne 0 \vee c\ne 1 \vee |b|<1$, for which the pair $((I-D_{UIO}C)A, C)$ is  at least detectable;
\item[2)] $a=0 \wedge c =1 \wedge |b|\ge1$, for which it is not. 
\end{itemize}
A possible solution  (corresponding to $a=b =0$, $c=1$) is 
$$D_{UIO} = \begin{bmatrix} 1 & 0\cr 0 &0 \cr 0 &1\end{bmatrix},$$
 that makes $((I-D_{UIO}C)A, C)$  reconstructable.
 Moreover,
 $$ (I-D_{UIO}C)A = \begin{bmatrix} 0 & 0 & 0 \cr -1 & 0 & 0\cr
0 & 0 & 0 \end{bmatrix}.$$
So, we can simply choose $L=0$ and $A_{UIO}= (I-D_{UIO}C)A$
 is nilpotent.

We now tackle the problem using the proposed data driven procedure. We set $T=20$. We generate the historical unknown input data by varying each component randomly and uniformly in the interval $(-2,2)$, so that it is reasonable to think that the Assumption is satisfied. We collect the corresponding output and state data in the time interval $[0,T-1]$. We use the collected data to reconstruct the matrix $C$ and to verify that 
the two conditions in Theorem \ref{UIO_final}, point (iii), hold. Then, we compute the following particular solution to equation \eqref{Xfshort} (note that there is no $U_p$), namely 
$$
\left[\begin{array}{c|c}
T_3 & T^*
\end{array}\right] = X_f \begin{bmatrix}
Y_f \\
X_p
\end{bmatrix}^\dag,
$$
and we verify that  the pair $(T^*,C)$ is detectable.  We select $T_2$ in order to make the matrix $A_{UIO} = T_4=  T^*-T_2C$ nilpotent, namely  
$$
T_2 = \begin{bmatrix}
    0  & 0 \\
   -1 & 0 \\
    0 &  - 1/3
\end{bmatrix}  \ \Rightarrow \
A_{UIO} = 
 \begin{bmatrix}
    0  &  0  & 0 \\
   0  &  0 &  0 \\
    0  & -1/3 &  0
\end{bmatrix}. 
$$
Finally, the matrices $B_{UIO}^y$ and $D_{UIO}$ are obtained from $(T_2, T_3, T_4)$  using  \eqref{Duio}.
With the approach proposed in \cite{Ferrari-Trecate} we would have obtained 
$$
A_{UIO} = 
 \begin{bmatrix}
   0  & 0  & 0 \\
   -0.5 &  0  &  0 \\
   0  & -0.4 &  -0.2
\end{bmatrix}, 
$$
whose eigenvalues are $\{0,0,-0.2\}$. Therefore, in this case both  
our solution and the one proposed in \cite{Ferrari-Trecate} work, but our approach has the advantage of allowing us to choose the observable eigenvalues of the matrix $A_{UIO}$ and, consequently,  the error convergence speed. The dynamics of the state estimation error in the two cases is illustrated in Figure \ref{figure1}, corresponding to a random initial condition and  a random disturbance taking values in $(-10,10)$. The solid black line is related to our design procedure, while the dashed red line corresponds to the solution in \cite{Ferrari-Trecate}. 
\begin{figure}
\centering
\! \includegraphics[width = 0.49\textwidth]{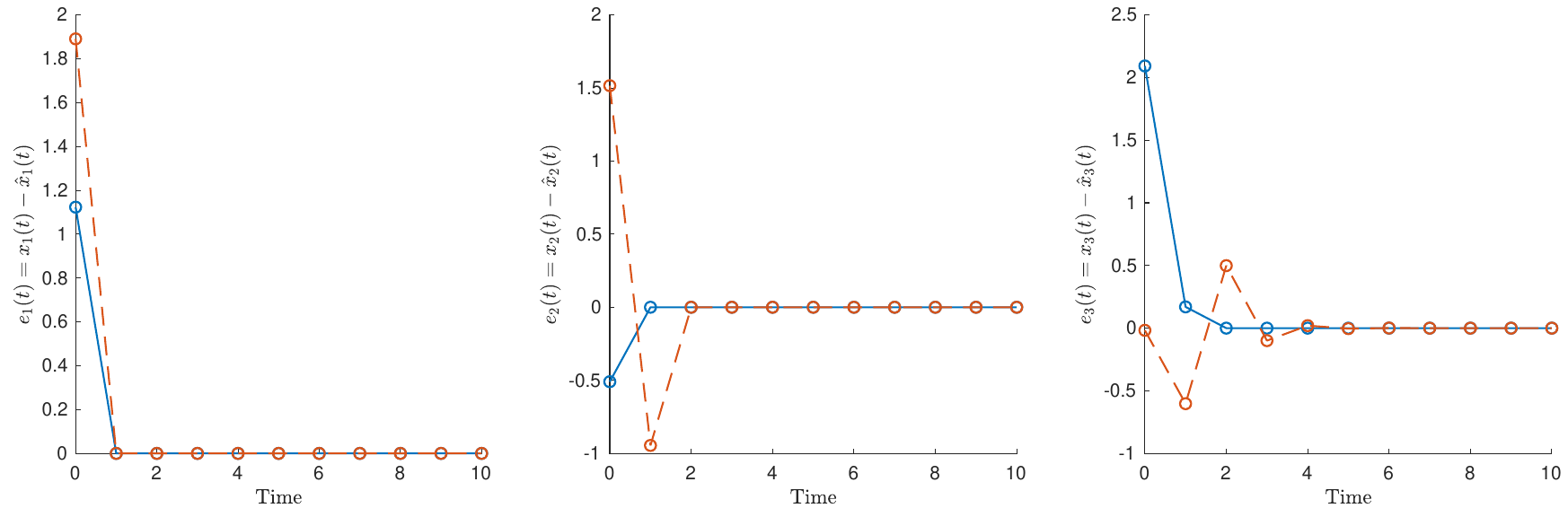}
\caption{Dynamics of the state estimation error component-wise}
 \label{figure1}
\end{figure} 
\end{example}}

\section{Conclusions}\label{concl}
 In this paper, we first revised the solution to the  
  UIO design problem from the model-based perspective. Then, we 
   provided necessary and sufficient conditions for the problem solvability via a  data-driven approach. 
 If the collected data are representative of the system dynamics,  the solvability conditions derived in the data-driven setting are equivalent to the    classic model-based ones {\color{black} and they can be tested a priori on data only.

 Our results represent   an improvement of the results that can be found in \cite{Ferrari-Trecate} due to the following contributions:  (1) the existence of a 
necessary and sufficient condition for the existence of a UIO that can be checked a priori  on the data,
  (2) the proof of
the equivalence of the model-based and data-driven approaches to the problem solution (under the Assumption on the data), and
(3)  the existence of  a bijective correspondence between the matrices 
$(A_{UIO}, B_{UIO}^u,B_{UIO}^y, D_{UIO})$ describing a  UIO $\hat \Sigma$  (see \eqref{SS_cond1}$\div$\eqref{SS_cond4}) and the matrices $T_1, T_2, T_3,$ and $T_4$, with $T_4$ Schur, such 
that \eqref{Xf} holds.    }

\bibliographystyle{plain}

\bibliography{%alias,Main,FB,BibHK,
Refer175}

\end{document}